\newtheorem{theorem}{Theorem}
\newtheorem{proposition}[theorem]{Proposition}
\newtheorem{lemma}[theorem]{Lemma}
\newtheorem{corollary}[theorem]{Corollary}
\theoremstyle{definition}
\newtheorem{example}[theorem]{Example}
\newtheorem{remark}[theorem]{Remark}
\newcommand{\tr}{\mathop\mathrm{tr}}
\newcommand{\C}{{\mathbb{C}}}
\newcommand{\Hc}{\mathcal{H}}
\newcommand{\CHI}{\hbox{\raise .4ex \hbox{$\chi$}}}
\newcommand{\Fc}{{\mathcal{F}}}
\newcommand{\R}{{\mathbb{R}}}
\begin{document}

\title{Frame Scalings: A Condition Number Approach}


\author{Peter Casazza}
\address{Department of Mathematics, University of Missouri, Columbia}
\email{casazzap@missouri.edu}
\author{Xuemei Chen}
\address{Department of Mathematics and Statistics, University of San Francisco} \email{xchen@math.usfca.edu}

\thanks{The authors were supported by
 NSF DMS 1307685; NSF ATD 1321779}
\maketitle

\begin{abstract}
Scaling frame vectors is a simple and noninvasive way to construct tight frames. However, not all frames can be modifed to tight frames in this fashion, so in this case we explore the problem of finding the best conditioned frame by scaling, which is crucial for applications like signal processing. We conclude that this problem is equivalent to solving a convex optimization problem involving the operator norm, which is unconventional since this problem was only studied in the perspective of Frobenius norm before. We also further study the Frobenius norm case in relation to the condition number of the frame operator, and the convexity of optimal scalings.
\end{abstract}

\vspace{0.1in}

\textbf{Keywords:}
Scalable frames, Condition number, Tight frame

\vspace{0.1in}

\textbf{AMS subject classification:}
  15B48, 42C15, 65F35

\section{Introduction}

A family of vectors $\Phi=\{\varphi_i\}_{i=1}^M$ is a \emph{frame} in an $N$-dimensional Hilbert space $\mathcal{H}_N$   if there are constants $0 < A \leq B < \infty$ so that for all $x \in \mathcal{H}_N$,
\begin{equation*} 
A \| x \|^2 \leq \sum_{i=1}^N |\langle x,\varphi_i\rangle|^2 \leq B \|x\|^2.
\end{equation*}
 The largest $A$ and smallest $B$ satisfying these inequalities are called  \emph{lower and upper frame bounds}, respectively. One often also writes $\Phi$ for the $N\times M$ matrix whose $i$th column is the vector $\varphi_i$. When $A=B$, the frame is called an \emph{$A$-tight frame}. Furthermore, $A=B=1$ produces a \emph{Parseval frame}. In the sequel, the set of frames  with $M$ vectors in $\Hc_N$ will be denoted by $\Fc(M,N)$. It is well known that $\Phi$ is $A$-tight if and only if
 \begin{equation}\label{equ:tight}
S:= \Phi\Phi^*=\sum_{i=1}^M\varphi_i\varphi_i^*=AI_N,
 \end{equation}
 where $I_N$ is the identity matrix in $\Hc_N$, and
 $S$ is the {\it frame operator} of the frame.
  We refer to \cite{co03} for an introduction to frame theory and to \cite{ck12}  for an overview of the current research in the field.

Frames have traditionally played a significant role in the theory of signal processing,  but today they have found application
to packet based network communication
 \cite{CCHKP11,FM2012}, wireless sensor networks 
\cite{CFHWZ12,CFMWZ11,CHKWZ12,CP13},
distributed processing
\cite{CCHKP11}, quantum information theory, bio-medical
engineering \cite{BMFCOK2012,MBFOK2012}, compressed sensing \cite{BCC12, CWW14}, fingerprinting \cite{MQ},
spectral theory \cite{CFMPS2011,FMPS2012,FMP2012},
and much more.

Some of the applications of frames result from their ability to deliver redundant, yet stable expansions. The redundancy of a frame is typically utilized by applications which may require robustness of the frame coefficients to noise, erasures, quantization, etc. In this setting tight frames can give fast convergence and recovery. It is known that unit norm tight frames are characterized in terms of 
the frame potential \cite{BF}. There have been various works on constructions of tight frames \cite{BW13, CFMPS2011, CFMWZ11, CHKWZ12, LHH13, VW08}. However, it is desirable to construct tight frames by just scaling each frame vector as it is noninvasive, and frame properties such as erasure resilience or sparse expansions are left untouched by this modification.

This procedure is called {\it frame scaling}. To be specific, a frame $\Phi = \{\varphi_i\}_{i=1}^M$ for $\Hc_N$ is called \emph{scalable} if there exist scalars $\{s_i\}_{i=1}^M$ such that $\{s_i\varphi_i\}_{i=1}^M$ is a tight frame for $\Hc_N$. By the nature of scaling, if a frame is scalable, there exist scalars such that the scaled frame is a Parseval frame. So by \eqref{equ:tight}, a frame is scalable if and only if there exists $c_i\geq0$ such that
$$I_N=\sum_{i=1}^Mc_i\varphi_i\varphi_i^*.$$
The $c_i$ here corresponds to each scalar as $c_i=|s_i|^2\geq0$. The notion of a {\it scalable frame} was first introduced in \cite{kopt13}. 
In \cite{kopt13},
characterizations of scalable frames,
both of functional analytic and geometric type were derived in the infinite as well as finite dimensional
settings. 
The work \cite{cc12} considers the complex case as well, and it was shown that the set of all possible sequences of scalars is the convex hull of minimal scalars. The paper \cite{CO15} focuses on the numerical algorithms to find different scalings with different purposes. Recently, the work \cite{CGOPW15} studies the case when a frame is not scalable by measuring 
\begin{equation}\label{equ:fro}\min_{c_i\geq0}\|I_N-\sum_{i=1}^Mc_i\varphi_i\varphi_i^*\|_F
\end{equation}
using the minimal ellipsoid of the convex hull of the frame vectors, where $\|\cdot\|_F$ is the Frobenius norm. The theory of frame scalings has also been extended to matrices with Laurent polynomials (with applications to construct tight wavelet filter banks) \cite{HO15}, and probabilistic frames \cite{LO15}.

When a frame is not scalable, we wish to find scalars such that $\{s_i\varphi_i\}_{i=1}^M$ is as tight as possible. This should naturally mean that $\{s_i\varphi_i\}_{i=1}^M$ is the best conditioned in the sense that the ratio of upper and lower frame bounds is the closest to 1. However, it is not clear whether solving \eqref{equ:fro} gives the best conditioned frame. 

In this note, we study the scalability of frames by  $s_i$ so that $\{s_i\varphi_i\}_{i=1}^M$ is best conditioned. For a given square matrix $T$, we define $$\displaystyle\text{cond}(T):=\frac{\text{biggest singular value of $T$}  }{\text{smallest singular value of $T$} }.$$ We include the singular case where condition number is $\infty$. We wish to solve 
\begin{equation}\label{equ:cond}
\min_{c_i\geq0}\text{cond}\left(\sum_{i=1}^Mc_i\varphi_i\varphi_i^*\right),\end{equation}
where $\text{cond}\left(\sum_{i=1}^Mc_i\varphi_i\varphi_i^*\right)$ is exactly the ratio of the upper and lower frame bounds of the scaled frame $\{s_i\varphi_i\}_{i=1}^M$. For convenience of discussion, we include ``frames'' that do not span, whose condition number is $\infty$.

The first contribution of this paper is to establish an equivalence between \eqref{equ:cond} and the following problem:
\begin{equation}\label{equ:operator}\min_{c_i\geq0}\|I_N-\sum_{i=1}^Mc_i\varphi_i\varphi_i^*\|_2,
\end{equation}
where $\|\cdot\|_2$ is the operator norm of a matrix. We prove this equivalence in Theorem \ref{thm:equiv} of Section \ref{sec:operator}. This means that rather than solving the scalability problem with the Frobenius norm, using the operator norm is more efficient if one wants to find the best conditioned frame by scaling, which is the original goal of scaling. Moreover, it is not very clear how one can solve problem \eqref{equ:cond} at first glance. With Theorem \ref{thm:equiv}, we have converted it to a convex programming Problem. Some properties of the minimizer of \eqref{equ:operator} are also established in Section \ref{sec:operator}.

The second contribution of the paper is to study how solving \eqref{equ:fro} is related to solving \eqref{equ:cond}. Unfortunately, we show in Section \ref{sec:invert} that the best scaled ``frame" from \eqref{equ:fro} may not even span
the space (hence is not a frame). Theorem \ref{thm17} lists a sufficient condition for when the best scaling corresponds to a frame in $\Hc_N$. In the end, we further study the convexity of all the optimal scalings (a polytope), and show that the vertices of this polytope are the so called minimal optimal scalings. This is interesting in its own right.

\section{Notations and convexity of optimal scalings}\label{sec:note}
 Given $\Phi=\{\varphi_i\}_{i=1}^M\in\Fc(M,N)$, we define $F=\left(|\langle\varphi_i,\varphi_j\rangle|^2\right)$, which is the Gram matrix of the frame of the outer products $\{\varphi_i\varphi_i^*\}_{i=1}^M$. Let $g=(\|\varphi_1\|^2, \|\varphi_2\|^2, \cdots, \|\varphi_M\|^2)$ be the vector of norms squared.


Some of the notations are common for both \eqref{equ:fro} and \eqref{equ:operator}, so $\|\cdot\|$ could be either the operator norm or the Frobenius norm in the following.  Let $$\min_{c_i\geq0}\|I_N-\sum_{i=1}^Mc_i\varphi_i\varphi_i^*\|=\|I_N-\sum_{i=1}^Mc_i'\varphi_i\varphi_i^*\|=\|I_N-T_{\Phi}\|.$$
We call $(c_i')_{i=1}^M$  an optimal scaling and $T_{\Phi}=\sum_{i=1}^Mc_i'\varphi_i\varphi_i^*$ an optimal operator.
We call $\mathcal{O}_{\Phi}=\{(c_i')_{i=1}^M: \min_{c_i\geq0}\|I_N-\sum_{i=1}^Mc_i\varphi_i\varphi_i^*\|=\|I_N-\sum_{i=1}^Mc_i'\varphi_i\varphi_i^*\|\}$ the set of optimal scalings. Moreover, we define  $(c_i')_{i=1}^M\in\mathcal{O}_{\Phi}$ to be a minimal optimal scaling if no proper subset of $\{\varphi_i\varphi_i^*:c_i'>0\}$ can span $T_{\Phi}$ with nonnegative coefficients. 


$T_{\Phi}$ is not necessarily unique for the operator norm (See Example \ref{exa:not}), in which case $T_{\Phi}$ will mean the set of optimal operators. So in the definition of minimal optimal scaling above, ``span $T_{\Phi}$'' means span any operator in $T_{\Phi}$. In fact, the concept of minimal optimal scaling is much more meaningful in the Frobenius norm case as we will see in Section \ref{sec:conv}.

The following theorem states that $\mathcal{O}_{\Phi}$ is a convex set. But for the Frobenius case, we can actually say much more, see Section \ref{sec:conv}.

\begin{theorem}
With either operator norm or Frobenius norm, the set of optimal scalings $\mathcal{O}_{\Phi}$ is a convex set. 
\end{theorem}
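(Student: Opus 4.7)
The plan is to recognize this claim as a direct instance of the standard fact that the minimizers of a convex function over a convex set form a convex set, with no frame-theoretic content beyond that. The proof will have three ingredients and I expect it to be short.

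First I would observe that the feasible set, namely the nonnegative orthant $\{(c_i)_{i=1}^M : c_i \geq 0\} \subset \mathbb{R}^M$, is convex, since it is an intersection of halfspaces. Next I would note that the map
\[
c = (c_i)_{i=1}^M \;\longmapsto\; I_N - \sum_{i=1}^M c_i \varphi_i \varphi_i^*
\]
is an affine map from $\mathbb{R}^M$ into the space of self-adjoint $N \times N$ matrices. Since every norm on a vector space is a convex function (by the triangle inequality and absolute homogeneity, one gets $\|t A + (1-t) B\| \leq t\|A\| + (1-t)\|B\|$ for $t \in [0,1]$), and since composition of a convex function with an affine map is convex, the objective
\[
F(c) \;:=\; \Bigl\|I_N - \sum_{i=1}^M c_i \varphi_i \varphi_i^*\Bigr\|
\]
is a convex function of $c$ for either choice of norm.

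Finally I would invoke the standard argmin-of-convex lemma: if $c', c'' \in \mathcal{O}_\Phi$ and $t \in [0,1]$, then $tc' + (1-t)c''$ still lies in the nonnegative orthant, and
\[
F(tc' + (1-t)c'') \;\leq\; t F(c') + (1-t) F(c'') \;=\; \min_{c \geq 0} F(c),
\]
so $tc' + (1-t)c'' \in \mathcal{O}_\Phi$. There is no real obstacle here; the statement is essentially automatic from the convexity of norms and the affine parametrization. The more delicate structural claim, that in the Frobenius case $\mathcal{O}_\Phi$ is in fact a polytope whose vertices are exactly the minimal optimal scalings, is the content of Section \ref{sec:conv} and is where the real work lies.
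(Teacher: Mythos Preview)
Your argument is correct and is essentially the same as the paper's own proof: both take two optimal scalings, form a convex combination, and use the triangle inequality for the norm to bound the objective at the combination by the minimum value. You frame it slightly more abstractly (convexity of a norm composed with an affine map, plus convexity of the feasible orthant), but the core computation is identical.
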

\begin{proof}
Let $(c_i')_{i=1}^M, (d_i')_{i=1}^M \in\mathcal{O}_{\Phi}$ be two optimal scalings, and $T_1=\sum_{i=1}^M c_i'\varphi_i\varphi_i^*, T_2=\sum_{i=1}^Md_i'\varphi_i\varphi_i^*$ be the corresponding optimal operators. For any $a,b\geq0, a+b=1$, we can prove $(ac_i'+bd_i')_{i=1}^M$ is an optimal scaling too since
\begin{align*}
&\|I_N-\sum_{i=1}^M(ac_i'+bd_i')\varphi_i\varphi_i^*\|=\|I_N-(aT_1+bT_2)\|\\
\leq&a\|I_N-T_1\|+b\|I_N-T_2\|=\min_{c_i\geq0}\|I_N-\sum_{i=1}^Mc_i\varphi_i\varphi_i^*\|.
\end{align*}
\end{proof}

\section{Minimizing with the operator norm}\label{sec:operator}
This section focuses on problem \eqref{equ:operator}, which uses the operator norm for the scaling problem. Once again, let 
\begin{equation}\label{equ:op2}
\min_{c_i\geq0}\|I_N-\sum_{i=1}^Mc_i\varphi_i\varphi_i^*\|_2=\|I_N-T_{\Phi}\|_2.
\end{equation}
The optimal operator $T_{\Phi}$ does not need to be unique here. 
\begin{example}\label{exa:not}
Let $\varphi_1=(1,\frac{1}{\sqrt{2}},0), \varphi_2=(\frac{1}{\sqrt{2}},1,0), \varphi_3=(0,0,1)$, so $\{\varphi_i\}_{i=1}^3$ is a frame of $\R^3$. For any $c_i\geq0$, it is straightforward to calculate the eigenvalues of the operator $\sum_{i=1}^3c_i\varphi_i\varphi_i^*$. One can obtain, without much effort, that \eqref{equ:op2} is minimized when $c_1=c_2=2/3
$, and $c_3$ takes on any value in $[1-2\sqrt{2}/3,1+2\sqrt{2}/3]$. With such selection of $c_i's$, $\sum_{i=1}^3c_i\varphi_i\varphi_i^*$ have
 eigenvalues $1-2\sqrt{2}/3, c_3, 1+2\sqrt{2}/3$, which result in different operators with different $c_3$.\end{example}

The following proposition is obvious.
\begin{proposition}\label{pclem1}
If a positive semi-definite matrix $T$ has eigenvalues
$\lambda_1 \ge \lambda_2 \ge \cdots \ge \lambda_N \geq0$, then
\[ \|I_N - T\|_2= \max\{|1-\lambda_1|,|1-\lambda_N|\}.\]
\end{proposition}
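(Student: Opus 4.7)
The plan is very short because the statement is essentially a spectral observation. First I would note that $T$ is positive semi-definite, hence Hermitian, so $I_N-T$ is Hermitian as well. For any Hermitian matrix $A$, the operator norm equals its spectral radius, i.e., $\|A\|_2=\max_i|\mu_i(A)|$, which is a standard fact I would simply invoke (it follows from the spectral decomposition $A=U\,\mathrm{diag}(\mu_i)\,U^*$ together with unitary invariance of $\|\cdot\|_2$).

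Next I would observe that if $T=U\,\mathrm{diag}(\lambda_1,\dots,\lambda_N)\,U^*$ is the spectral decomposition of $T$, then $I_N-T=U\,\mathrm{diag}(1-\lambda_1,\dots,1-\lambda_N)\,U^*$, so the eigenvalues of $I_N-T$ are precisely $1-\lambda_i$. Combining with the previous step gives
\[
\|I_N-T\|_2=\max_{1\le i\le N}|1-\lambda_i|.
\]

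Finally I would reduce the max over $i$ to just the two extremes. Since $\lambda_1\ge\lambda_2\ge\cdots\ge\lambda_N\ge 0$, the numbers $1-\lambda_i$ are increasing in $i$ and lie in $(-\infty,1]$. Hence the smallest of them is $1-\lambda_1$ and the largest is $1-\lambda_N$, and the maximum of $|1-\lambda_i|$ on a monotone real sequence is attained at one of the endpoints, giving
\[
\max_{1\le i\le N}|1-\lambda_i|=\max\{|1-\lambda_1|,|1-\lambda_N|\},
\]
as claimed.

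There is essentially no main obstacle here — everything is a one-line consequence of the spectral theorem plus the monotonicity of the eigenvalue list; the only thing worth being careful about is recording that positive semi-definiteness is used solely to guarantee Hermiticity (and therefore that $\|I_N-T\|_2$ really is the spectral radius, not merely bounded by it).
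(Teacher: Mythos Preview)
Your proof is correct and is exactly the spectral-radius computation the paper has in mind; the paper itself simply declares the proposition ``obvious'' and gives no proof, so there is nothing to compare beyond noting that your argument is the standard one-line justification.
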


The next proposition is essential in proving that solving \eqref{equ:operator} optimizes the condition number. It essentially says that the optimal operator comes to a balance when the largest and the smallest eigenvalue have equal distance to 1.
\begin{proposition}\label{pro:1N}
Let $\Phi = \{\varphi_i\}_{i=1}^M\in\Fc(M,N)$ 
be a frame for $\Hc_N$.
If $T=\sum_{i=1}^Mc_i\varphi_i\varphi_i^*$ has eigenvalues $\lambda_1 \ge \lambda_2 \ge 
\cdots \ge \lambda_N \geq0$ and $c=\frac{2}{\lambda_1+\lambda_N}$, then
\[ \|I-cT\|_2 = c\lambda_1-1=1-c\lambda_N.\]
Moreover, if $T=T_{\Phi}$ then
\[ \|I_N-T\|_2=\lambda_1-1=1-\lambda_N,\]
and consequently $\lambda_1+\lambda_N=2$.
\end{proposition}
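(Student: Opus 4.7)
The plan is to establish the first identity by direct computation and then deduce the second by an internal scaling argument: if $T_\Phi$ is optimal then scaling all $c_i'$ by a positive constant yields another admissible scaling, so $c = 1$ must itself minimize $\|I - c T_\Phi\|_2$ over $c > 0$, and the first part pins down where this minimum occurs.

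For the first statement, note that $cT$ is positive semi-definite with eigenvalues $c\lambda_1 \ge \cdots \ge c\lambda_N \ge 0$, so Proposition \ref{pclem1} gives $\|I - cT\|_2 = \max\{|1 - c\lambda_1|,|1 - c\lambda_N|\}$. Plugging in $c = 2/(\lambda_1 + \lambda_N)$ reduces both $c\lambda_1 - 1$ and $1 - c\lambda_N$ to the common nonnegative value $(\lambda_1 - \lambda_N)/(\lambda_1 + \lambda_N)$, so the max collapses to this quantity. This also shows that, viewing $f(c) := \|I - cT\|_2$ as a function on $(0,\infty)$, the minimum is $(\lambda_1 - \lambda_N)/(\lambda_1 + \lambda_N)$, achieved uniquely at $c^\star = 2/(\lambda_1 + \lambda_N)$ when $\lambda_1 > \lambda_N > 0$ (the left branch $1 - c\lambda_N$ is strictly decreasing and the right branch $c\lambda_1 - 1$ is strictly increasing past the crossing).

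For the second statement, set $T = T_\Phi$ with eigenvalues $\lambda_1 \ge \cdots \ge \lambda_N$. For any $c > 0$, the family $cT = \sum_i(cc_i')\varphi_i\varphi_i^*$ is an admissible scaling, so by optimality $f(1) \le f(c)$ for every $c > 0$. Thus $c=1$ minimizes $f$ on $(0,\infty)$. Combined with the first part, this forces $1 = 2/(\lambda_1 + \lambda_N)$, i.e.\ $\lambda_1 + \lambda_N = 2$, and then $\|I_N - T\|_2 = \lambda_1 - 1 = 1 - \lambda_N$ by substitution. The degenerate case $\lambda_1 = \lambda_N$ is handled separately: it forces $T = \lambda_1 I$, and $c = 1$ minimizing $|1 - c\lambda_1|$ gives $\lambda_1 = 1$, so $\lambda_1 + \lambda_N = 2$ still holds.

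The one technical point requiring care is ruling out $\lambda_N = 0$, since the uniqueness of $c^\star$ above needs $\lambda_N > 0$ and moreover $|1 - \lambda_N| = 1$ would block the argument. To handle this, I would use that $\Phi$ is a frame, so $S = \sum_i \varphi_i\varphi_i^*$ is positive definite; taking $c_i = 2/(\lambda_1(S) + \lambda_N(S))$ for all $i$ gives an admissible scaling with $\|I - cS\|_2 = (\lambda_1(S) - \lambda_N(S))/(\lambda_1(S) + \lambda_N(S)) < 1$. Hence $\|I - T_\Phi\|_2 < 1$, but $\lambda_N(T_\Phi) = 0$ would give $\|I - T_\Phi\|_2 \ge |1 - \lambda_N(T_\Phi)| = 1$, a contradiction. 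So $\lambda_N > 0$ and the scaling argument closes. I expect this last piece to be the main obstacle, because it is the only place where one must exit the family $\{cT_\Phi : c > 0\}$ and use another scaling coming from $\Phi$ itself.
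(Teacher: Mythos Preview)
Your argument is correct and follows essentially the same scaling idea as the paper: both exploit that $cT_\Phi$ is an admissible competitor for every $c>0$, so optimality forces $c=1$ to coincide with the balancing value $2/(\lambda_1+\lambda_N)$; the paper phrases this as a two-case contradiction ($\lambda_1+\lambda_N>2$ versus $<2$) while you phrase it as locating the unique minimizer of $f(c)=\|I-cT\|_2$. Your explicit treatment of $\lambda_N=0$ via the comparison with $cS$ is in fact more careful than the paper's own proof, whose Case~2 strict inequality $1-c\lambda_N<|1-\lambda_N|$ silently presumes $\lambda_N>0$; the paper only recovers invertibility of $T_\Phi$ later as a corollary.
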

\begin{proof}
We check
\[ c\lambda_N = \frac{2}{\lambda_1+\lambda_N}\lambda_N
= \frac{2}{\frac{\lambda_1}{\lambda_N}+1}\le 1,
\]
and
\[ c\lambda_1 = \frac{2}{\lambda_1+\lambda_N}\lambda_1
= \frac{2}{1+\frac{\lambda_N}{\lambda_1}}\ge 1,
\]
Also,
\[ c\lambda_1+c\lambda_N = c(\lambda_1+\lambda_N) =2,\]
so that $ c\lambda_1-1=1-c\lambda_N.$
Now, 
\[ \|I_N-cT\|_2=\max\{|1-c\lambda_1|,|1-c\lambda_N|\}=c\lambda_1-1=1-c\lambda_N.\]

Now assume $T=T_{\Phi}$ and we check two cases.
\vskip10pt
\noindent {\bf Case 1:}  $\lambda_1+\lambda_N > 2$.

In this case, $c < 1$ and $\lambda_1>1$. So
\[ \|I_N-cT\|_2=c\lambda_1-1 < |\lambda_1-1|
\leq \|I_N-T\|_2,\] which is a contradiction to \eqref{equ:op2}.
\vskip10pt
\noindent {\bf Case 2:}  $\lambda_1+\lambda_N < 2$.

In this case, $c>1$ and $\lambda_N<1$. So
\[ \|I_N-cT\|_2= 1-c\lambda_N < |1-\lambda_N|
\le \|I_N-T\|_2,\]  which is a contradiction to \eqref{equ:op2}.

 So we must have $c=1$ and therefore $\lambda_1\geq1\geq\lambda_N$.
\end{proof} 

The proof of Proposition \ref{pro:1N} immediately implies
\begin{corollary}\label{cor:balance}
If $T$ has eigenvalues $\lambda_1\geq\cdots\geq\lambda_N\geq0$, let $c=\frac{2}{\lambda_1+\lambda_N}$, then
$$\|I_N-cT\|_2\leq\|I_N-T\|_2,$$ and equality holds if and only if $c=1$.
\end{corollary}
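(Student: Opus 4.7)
The plan is to reuse the two-case analysis already carried out in the proof of Proposition \ref{pro:1N} and simply keep track of when the inequalities are strict. The first half of Proposition \ref{pro:1N} gives, for any positive semi-definite $T$ with $\lambda_1 > 0$, the identity
\[
\|I_N - cT\|_2 = c\lambda_1 - 1 = 1 - c\lambda_N,
\]
while Proposition \ref{pclem1} gives $\|I_N - T\|_2 = \max\{|1-\lambda_1|,|1-\lambda_N|\}$. So the whole argument reduces to comparing these two quantities according to the sign of $\lambda_1+\lambda_N-2$.

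Suppose first $\lambda_1+\lambda_N > 2$, so $c < 1$. A quick observation (the same one used in Case 1 of Proposition \ref{pro:1N}) shows $\lambda_1 > 1$, since otherwise $\lambda_N \leq \lambda_1 \leq 1$ would force $\lambda_1+\lambda_N \leq 2$. Multiplying $\lambda_1 > 0$ by $c < 1$ therefore yields
\[
\|I_N - cT\|_2 = c\lambda_1 - 1 < \lambda_1 - 1 = |1-\lambda_1| \leq \|I_N - T\|_2,
\]
with strict inequality. The symmetric case $\lambda_1+\lambda_N < 2$ gives $c > 1$ and $\lambda_N < 1$, and the analogous argument applied to $1-c\lambda_N < 1-\lambda_N$ again produces strict inequality. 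Finally, when $\lambda_1+\lambda_N = 2$ we have $c = 1$ and the two norms agree trivially.

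Combining the three cases establishes both the inequality and the characterization of equality: $c = 1$ (equivalently $\lambda_1+\lambda_N = 2$) is the only situation in which the bound is tight. I do not foresee any real obstacle here; the proposition is effectively packaged inside Proposition \ref{pro:1N}, and the only extra bookkeeping is verifying strictness when $c\neq 1$. A harmless caveat is that we tacitly assume $T\neq 0$ so that $c$ is defined; if $T=0$ the statement is vacuous.
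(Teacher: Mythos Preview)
Your approach is exactly what the paper has in mind: the corollary is stated there as an immediate consequence of the two-case analysis in the proof of Proposition~\ref{pro:1N}, and you have simply unpacked that analysis while tracking when the inequalities are strict. The inequality $\|I_N-cT\|_2\le\|I_N-T\|_2$ and the implication ``$c=1\Rightarrow$ equality'' are handled correctly and match the paper's reasoning.

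There is one small slip in the ``only if'' direction. In Case~2 you deduce $1-c\lambda_N<1-\lambda_N$ from $c>1$, but this requires $\lambda_N>0$. If $\lambda_N=0$ and $0<\lambda_1<2$ then $c=2/\lambda_1\neq 1$, yet $\|I_N-cT\|_2=1=\|I_N-T\|_2$, so the equality characterization as literally stated fails. The paper glosses over this edge case as well, and in the only place the corollary is invoked (Theorem~\ref{thm:equiv}(b)) the relevant operator has $\lambda_N>0$, so the issue is harmless in context; still, your write-up should either add the hypothesis $\lambda_N>0$ or flag the exception explicitly.
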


Now we are ready to prove the main theorem.
\begin{theorem}\label{thm:equiv}
Problem \eqref{equ:cond} and Problem \eqref{equ:operator} are equivalent in the sense that
\begin{itemize}
\item[(a)] Solving \eqref{equ:operator} gives the minimal condition number among $\{\sum_{i=1}^M c_i\varphi_i\varphi_i^*: c_i\geq0\}$.
\item[(b)] If $\{d_i\}_{i=1}^M=\arg\min_{c_i\geq0}\text{cond}\left(\sum_{i=1}^Mc_i\varphi_i\varphi_i^*\right)$, then the scalars $\{cd_i\}_{i=1}^M$ achieves the minimum in \eqref{equ:operator} for some $c>0$.
\end{itemize} As a consequence of (a), if $$\min_{c_i\geq0}\|I_N-\sum_{i=1}^Mc_i\varphi_i\varphi_i^*\|_2=\|I_N-T_{\Phi}\|_2,$$ then $T_{\Phi}$ has the smallest condition number among $\{\sum_{i=1}^Mc_i\varphi_i\varphi_i^*: c_i\geq0\}$, and the condition number of the optimal operator has an upper bound as $$\text{cond}(T_{\Phi})\leq \text{cond}(\Phi\Phi^*).$$
\end{theorem}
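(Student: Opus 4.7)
The plan is to leverage Proposition \ref{pro:1N}, which forces any optimal operator $T_{\Phi}$ for \eqref{equ:operator} to have balanced extreme eigenvalues $\lambda_1 + \lambda_N = 2$, together with Corollary \ref{cor:balance}, which lets us replace any competitor $T = \sum_{i=1}^M c_i\varphi_i\varphi_i^*$ by a positive scalar multiple $cT$ (still in the feasible cone) that is likewise balanced without increasing $\|I_N - T\|_2$. The crucial algebraic observation is that on this ``balanced'' subfamily, operator-norm distance to $I_N$ is a strictly increasing function of the condition number: if $\mu_1 + \mu_N = 2$, then
\[
\|I_N - T\|_2 = \mu_1 - 1 = \frac{\text{cond}(T) - 1}{\text{cond}(T) + 1},
\]
and $x \mapsto (x-1)/(x+1)$ is strictly increasing on $[1, \infty]$. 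This monotone correspondence does most of the work.

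For part (a), I would take an arbitrary feasible $T$ with eigenvalues $\mu_1 \geq \cdots \geq \mu_N$, rescale to $cT$ with $c = 2/(\mu_1+\mu_N)$ so that Corollary \ref{cor:balance} gives $\|I_N - cT\|_2 \leq \|I_N - T\|_2$, and then invoke optimality of $T_{\Phi}$ to get $\|I_N - T_{\Phi}\|_2 \leq \|I_N - cT\|_2$. Translating both sides into the above parametrization (using Proposition \ref{pro:1N} on the left and balance of $cT$ on the right) and applying monotonicity of $(x-1)/(x+1)$ yields $\text{cond}(T_{\Phi}) \leq \text{cond}(cT) = \text{cond}(T)$. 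Degenerate cases where $T$ fails to span are trivial since $\text{cond}(T) = \infty$.

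Part (b) is essentially the same argument run in reverse: given a condition-number minimizer $\{d_i\}$ with operator $T$ and extreme eigenvalues $\mu_1, \mu_N$, choose $c = 2/(\mu_1+\mu_N)$. Then $cT$ is balanced, so $\|I_N - cT\|_2 = (\text{cond}(T)-1)/(\text{cond}(T)+1)$. By part (a), $\text{cond}(T_{\Phi}) \leq \text{cond}(T)$, while condition-number-optimality of $\{d_i\}$ forces the reverse inequality, so the two condition numbers coincide; combined with balance of $T_{\Phi}$, this gives $\|I_N - cT\|_2 = \|I_N - T_{\Phi}\|_2$, meaning $\{cd_i\}$ is a minimizer of \eqref{equ:operator}. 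Finally, the bound $\text{cond}(T_{\Phi}) \leq \text{cond}(\Phi\Phi^*)$ is immediate from part (a), since $\Phi\Phi^* = \sum_{i=1}^M 1\cdot \varphi_i\varphi_i^*$ is a specific feasible point in the cone $\{\sum c_i\varphi_i\varphi_i^* : c_i \geq 0\}$.

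The plan involves no real obstacle once the balanced-family reduction via Corollary \ref{cor:balance} is identified; the only mild subtlety is bookkeeping around singular operators and the degenerate case $\lambda_N = 0$, but the monotone map $x \mapsto (x-1)/(x+1)$ extends continuously to $\infty$, so the same reasoning handles those cases uniformly.
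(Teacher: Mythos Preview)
Your proposal is correct and follows essentially the same route as the paper: balance an arbitrary competitor via the rescaling $c = 2/(\mu_1+\mu_N)$ from Proposition~\ref{pro:1N}, then compare with the optimal $T_\Phi$ (which is already balanced) and read off the condition-number inequality. The only cosmetic difference is that you package the eigenvalue comparison through the strictly increasing map $x \mapsto (x-1)/(x+1)$ on the balanced family, whereas the paper compares $\lambda_1 \le c\mu_1$ and $\lambda_N \ge c\mu_N$ separately; for part~(b) you invoke part~(a) to equate the two condition numbers, while the paper argues directly via Corollary~\ref{cor:balance}, but both arguments are equivalent.
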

\begin{proof}
(a) Assume $T_{\Phi}$ has eigenvalues $\lambda_1 \ge \lambda_2 \ge 
\cdots \ge \lambda_N\geq0$.
By Proposition \ref{pro:1N}, $\lambda_1 \ge 1 \ge \lambda_N$ and $\lambda_1 -1 = 1-\lambda_N$.  For arbitrary $c_i\geq0$, let  $R=\sum_{i=1}^Mc_i\varphi_i\varphi_i^*$ have eigenvalues
$\mu_1 \ge \mu_2 \ge \cdots \ge \mu_N\geq0$.  Letting 
$c=\frac{2}{\mu_1+\mu_N}$, we have by Proposition \ref{pro:1N},
\[ \|I_N-cR\|_2= c\mu_1-1 = 1-c\mu_N.\]
Now, 
\[ \lambda_1-1 = \|I_N-T_{\Phi}\|_2 \le \|I_N-cR\|_2 = c\mu_1-1.\]
Hence, $1\le \lambda_1 \le c\mu_1$. Similarly,
\[ 1-\lambda_N = \|I_N-T_{\Phi}\|_2\le \|I_N-cR\|_2 = 1-c\mu_N,\]
and hence $1\ge \lambda_N \ge c\mu_N$.  It follows immediately that
\[ \text{cond}(T_{\Phi})= \frac{\lambda_1}{\lambda_N}\leq \frac{c\mu_1}{c\mu_N}=\text{cond}(R).\] 

This shows that solving \eqref{equ:operator} gives the minimal condition number among all scalings $c_i$. 
\vskip10pt
(b) Suppose $T=\sum_{i=1}^Md_i\varphi_i\varphi_i^*$ has the smallest condition number after solving \eqref{equ:cond}, and $T$ has eigenvalues $\lambda_1\geq\cdots\geq\lambda_N>0$. For arbitrary $c_i\geq0$, let $R=\sum_{i=1}^Mc_i\varphi_i\varphi_i^*$ have eigenvalues
$\mu_1 \ge \mu_2 \ge \cdots \ge \mu_N\geq0$, and let $c=\frac{2}{\lambda_1+\lambda_N}, d=\frac{2}{\mu_1+\mu_N}$.  So
\begin{align*}
&\text{cond}(cT)\leq\text{cond}(dR)\\
\Rightarrow&\frac{c\lambda_1}{c\lambda_N}\leq \frac{d\mu_1}{d\mu_N}\quad\Rightarrow \frac{2-c\lambda_N}{c\lambda_N}\leq \frac{2-d\mu_N}{d\mu_N}\\
\Rightarrow& 1-c\lambda_N\leq1-d\mu_N\quad\Rightarrow\|I_N-cT\|_2\leq \|I_N-dR\|_2.
\end{align*} By Corollary \ref{cor:balance},$$\|I_N-cT\|_2\leq \|I_N-dR\|_2\leq\|I_N-R\|_2.$$
\end{proof}

\begin{remark}From the proof of Proposition \ref{pro:1N}, Corollary \ref{cor:balance} and Theorem \ref{thm:equiv}, Theorem \ref{thm:equiv} also holds for the case when $\Phi$ is not a frame (do not span $\Hc_N$).\end{remark}

If we do start with an actual frame $\Phi$, since the condition number of $T_{\Phi}$ has a finite upper bound, we immediately have
\begin{corollary}
Given $\Phi\in\Fc(M,N)$, using the operator norm, any optimal operator $T_{\Phi}$, as in $\min_{c_i\geq0}\|I_N-\sum_{i=1}^Mc_i\varphi_i\varphi_i^*\|_2=\|I_N-T_{\Phi}\|_2$,  is invertible.
\end{corollary}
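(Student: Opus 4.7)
The plan is to derive invertibility directly from the condition-number bound established in Theorem \ref{thm:equiv}, namely $\mathrm{cond}(T_\Phi) \le \mathrm{cond}(\Phi\Phi^*)$. Since the optimal operator $T_\Phi$ is positive semi-definite, saying $T_\Phi$ is invertible is equivalent to saying its smallest eigenvalue is strictly positive, which in turn is equivalent to $\mathrm{cond}(T_\Phi) < \infty$ (with the convention from the introduction that a singular operator has condition number $\infty$).

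First, I would observe that because $\Phi \in \Fc(M,N)$ is an honest frame for $\Hc_N$, its vectors span $\Hc_N$, so $\Phi\Phi^* = \sum_{i=1}^M \varphi_i\varphi_i^*$ is strictly positive definite. In particular, its smallest eigenvalue (the lower frame bound) is positive and $\mathrm{cond}(\Phi\Phi^*)$ is finite. Note that $\Phi\Phi^*$ corresponds to the trivial scaling $c_i \equiv 1 \ge 0$, so it is a legitimate element of the set $\{\sum_{i=1}^M c_i\varphi_i\varphi_i^* : c_i \ge 0\}$ over which Theorem \ref{thm:equiv} compares condition numbers.

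Next, I would apply Theorem \ref{thm:equiv}(a) to any optimal operator $T_\Phi$: since solving \eqref{equ:operator} yields the smallest condition number among all such nonnegative scalings, we have
\[
\mathrm{cond}(T_\Phi) \le \mathrm{cond}(\Phi\Phi^*) < \infty.
\]
In particular, the smallest eigenvalue of $T_\Phi$ is strictly positive, so $T_\Phi$ is invertible.

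There is essentially no obstacle here; the content of the corollary has already been done in Theorem \ref{thm:equiv}. The only thing to be mindful of in the write-up is making explicit that $\Phi\Phi^*$ is a valid competitor in the minimization (so that the bound from Theorem \ref{thm:equiv} applies to it) and that the finiteness of $\mathrm{cond}(\Phi\Phi^*)$ uses the assumption that $\Phi$ actually spans $\Hc_N$, which is built into the definition of $\Fc(M,N)$.
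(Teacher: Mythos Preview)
Your proposal is correct and matches the paper's own reasoning exactly: the paper states the corollary as an immediate consequence of the bound $\text{cond}(T_\Phi)\le\text{cond}(\Phi\Phi^*)$ from Theorem~\ref{thm:equiv}, using that $\Phi\Phi^*$ has finite condition number because $\Phi$ is a genuine frame. There is nothing to add.
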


Being invertible means that the smallest eigenvalue $\lambda_N>0$, so we can easily get an upper bound of $\|I_N-T_{\Phi}\|_2$ as 
$$\|I_N-T_{\Phi}\|_2=1-\lambda_N<1.$$
The following theorem shows that this upper bound is tight.

\begin{theorem}\label{thm:1}
For any fixed $M,N$, and any $\Phi\in\Fc(M,N)$,
$$\min_{c_i\geq0}\|I_N-\sum_{i=1}^Mc_i\varphi_i\varphi_i^*\|_2=\|I_N-T_{\Phi}\|_2<1.$$ Moreover, given an arbitrary $\epsilon>0$, there exists a frame $\Psi=\{\psi_i\}_{i=1}^M$ such that 
$$\min_{c_i\geq0}\|I_N-\sum c_i\psi_i\psi_i^*\|_2=1-\epsilon.$$
\end{theorem}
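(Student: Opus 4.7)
The first inequality is essentially a restatement of what immediately precedes the theorem: the corollary guarantees $T_\Phi$ is invertible, so its smallest eigenvalue $\lambda_N$ is strictly positive, and then Proposition \ref{pro:1N} yields $\|I_N-T_\Phi\|_2 = 1-\lambda_N < 1$.

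For the sharpness claim (assuming $N\ge 2$ and $\epsilon\in(0,1)$), my plan is to build a frame with two nearly parallel vectors sitting in the $\text{span}(e_1,e_2)$ plane, with the remaining vectors orthonormal. Set $\beta = \sqrt{\epsilon(2-\epsilon)}\in(0,1)$, so that $\sqrt{1-\beta^2}=1-\epsilon$. Define
\[
\psi_1 = e_1,\quad \psi_2 = \sqrt{1-\beta^2}\,e_1 + \beta\,e_2,\quad \psi_j = e_j \text{ for } 3\le j\le N,\quad \psi_j = e_1 \text{ for } N<j\le M.
\]
This clearly spans $\R^N$ and is therefore a frame in $\Fc(M,N)$.

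The key observation is that for any scalars $c_i\ge 0$, the operator $T=\sum c_i\psi_i\psi_i^*$ is block-diagonal with respect to $\R^N = \text{span}(e_1,e_2)\oplus\text{span}(e_3,\dots,e_N)$: the second block is $\text{diag}(c_3,\dots,c_N)$, while the first block is $T_{11} = \tilde c_1\,\psi_1\psi_1^* + c_2\,\psi_2\psi_2^*$, where $\tilde c_1 := c_1 + \sum_{j>N}c_j\ge 0$. Hence
\[
\|I_N-T\|_2 \;=\; \max\bigl\{\|I_2-T_{11}\|_2,\ \max_{j\ge 3}|1-c_j|\bigr\}.
\]
The second max is made zero by taking $c_j=1$ for $3\le j\le N$, so the problem collapses to minimizing $\|I_2-T_{11}\|_2$ over $\tilde c_1,c_2\ge 0$.

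For this 2-D problem I invoke Theorem \ref{thm:equiv} and minimize the condition number of $T_{11}$ instead. Setting $c_2=1$ and $\tilde c_1=t\ge 0$, the characteristic polynomial is $\lambda^2-(t+1)\lambda + t\beta^2=0$, and $\lambda_1/\lambda_2+\lambda_2/\lambda_1 = (t+1)^2/(t\beta^2) - 2$; by AM--GM this is minimized uniquely at $t=1$. The resulting eigenvalues of $T_{11}$ are $1\pm\sqrt{1-\beta^2}$, which already satisfy the balance condition $\lambda_1+\lambda_N=2$ of Proposition \ref{pro:1N}, so no further rescaling is needed. Thus
\[
\min_{c_i\ge 0}\|I_N - \textstyle\sum c_i\psi_i\psi_i^*\|_2 = \|I_2-T_{11}\|_2 = \sqrt{1-\beta^2} = 1-\epsilon,
\]
achieved by $c_1=c_2=\cdots=c_N=1$ and $c_j=0$ for $j>N$. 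The only slightly delicate step is the 2-D optimization, but once the block-diagonal reduction is made, it is an elementary AM--GM computation combined with Proposition \ref{pro:1N}.
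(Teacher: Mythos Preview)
Your argument is correct. The first inequality matches the paper's reasoning exactly. For the sharpness part, however, your route is genuinely different from the paper's: the paper proceeds by first proving that $f(\Phi)=\min_{c_i\ge 0}\|I_N-\sum c_i\varphi_i\varphi_i^*\|_2$ is continuous on unit-norm $N\times M$ matrices (via a uniform bound on optimal scalings and a subsequential compactness argument), and then perturbs the degenerate configuration of $M$ copies of $e_1$ (where $f=1$) into a spanning set, invoking continuity to conclude that $f(\Psi)$ is close to $1$. Your explicit construction with two nearly parallel vectors in $\mathrm{span}(e_1,e_2)$, followed by the block-diagonal reduction and the AM--GM computation of the optimal condition number, is more elementary and hits the value $1-\epsilon$ exactly; the paper's continuity argument, strictly read, only gets arbitrarily close and would need an intermediate-value step to achieve equality. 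Conversely, the paper's continuity lemma has some independent interest beyond this theorem. One small point worth making explicit in your write-up: fixing $c_2=1$ in the two-dimensional optimization is legitimate because the condition number is scale-invariant and any optimum must have $c_2>0$ (otherwise $T_{11}$ has rank one and infinite condition number); you rely on this implicitly when writing $(t+1)^2/(t\beta^2)$.
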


To prove the tightness part of this theorem, we need a few lemmas first.

\begin{lemma}
Given $M\ge N$, there is a universal constant $K>0$ so
that whenever $\{\varphi_i\}_{i=1}^M$ is a unit norm
frame in $\Hc_N$ and
\[\|I_N-\sum_{i=1}^M c_i'\varphi_i\varphi_i^*\|_2=
\min_{c_i\geq0}\|I_N-\sum c_i\varphi_i\varphi_i^*\|_2,\]
then $c_i' \le K$ for all $i=1,2,\ldots,M.$
\end{lemma}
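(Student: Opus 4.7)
The plan is to extract a uniform upper bound on each $c_i'$ by computing $\trace(T_\Phi)$ in two different ways, where $T_\Phi = \sum_{i=1}^M c_i' \varphi_i\varphi_i^*$ is an optimal operator. The unit-norm hypothesis will be used precisely once, in the trace identity, and the remainder of the argument is purely spectral.

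First I would invoke the first assertion of Theorem \ref{thm:1} (already established from the invertibility Corollary together with Proposition \ref{pro:1N}) to obtain $\|I_N - T_\Phi\|_2 < 1$. Letting $\lambda_1 \ge \cdots \ge \lambda_N \ge 0$ denote the eigenvalues of $T_\Phi$, Proposition \ref{pclem1} gives $\|I_N - T_\Phi\|_2 = \max\{|1-\lambda_1|, |1-\lambda_N|\}$, so in particular $\lambda_1 < 2$. Consequently
\[ \trace(T_\Phi) = \sum_{k=1}^N \lambda_k \le N\lambda_1 < 2N. \]

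For the second computation, using $\|\varphi_i\|=1$,
\[ \trace(T_\Phi) = \sum_{i=1}^M c_i' \trace(\varphi_i\varphi_i^*) = \sum_{i=1}^M c_i' \|\varphi_i\|^2 = \sum_{i=1}^M c_i'. \]
Combining the two computations yields $\sum_{i=1}^M c_i' < 2N$, and since every $c_i'$ is nonnegative each individual term is bounded by the whole sum, so $c_i' < 2N$ for all $i$. Hence $K = 2N$ works, depending only on the ambient dimension $N$ and in particular independent of $M$ and of the specific unit-norm frame.

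I do not anticipate a genuine obstacle here; the only subtle point is recognizing why the lemma is stated for \emph{unit-norm} frames. Without that hypothesis, the trace identity would only yield $c_i' \|\varphi_i\|^2 < 2N$, and converting this into a bound on $c_i'$ alone would require a uniform lower bound on the vector norms, which is not available for arbitrary frames.
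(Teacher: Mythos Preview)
Your argument is correct and essentially identical to the paper's: both bound $\lambda_1$ via Proposition~\ref{pro:1N}, then compute $\trace(T_\Phi)$ two ways to obtain $c_i' \le 2N$. The only cosmetic difference is that the paper cites Proposition~\ref{pro:1N} directly for $\lambda_1 \le \lambda_1+\lambda_N = 2$, whereas you route through the consequence $\|I_N-T_\Phi\|_2<1$ (which itself rests on Proposition~\ref{pro:1N} and the invertibility corollary); there is no circularity since that inequality is established before the lemma and only the tightness half of Theorem~\ref{thm:1} depends on it.
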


\begin{proof}
Assume the optimal operator $T_{\Phi}$ has eigenvalues $\lambda_1 \ge \cdots\geq\lambda_N$.
By Proposition \ref{pro:1N}, 
\[ \lambda_1 \le \lambda_1+\lambda_N =2.\]
Therefore
\[ c_i'\le \sum_{i=1}^M c_i' =\sum_{i=1}^Mc_i'\|\varphi_i\|_2^2 =\text{Tr}\left(\sum_{i=1}^M c_i'\varphi_i\varphi_i^*\right)= \sum_{i=1}^N\lambda_i
\le 2N,\] 
\end{proof}

\begin{lemma}\label{lem:continuous}
The function
\[ f(\Phi)=f(\varphi_1,\varphi_2,\ldots,\varphi_M)=\min_{c_i\geq0}\|I_N-\sum c_i\varphi_i\varphi_i^*\|_2,\]
defined on $N\times M$ matrices with unit norm columns,
is  continuous with respect to $\Phi$.
\end{lemma}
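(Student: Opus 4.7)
The plan is to combine joint continuity of the inner objective with a standard Berge-type compactness argument. Define
$g(\Phi, c) := \|I_N - \sum_{i=1}^M c_i \varphi_i \varphi_i^*\|_2$
on pairs $(\Phi, c)$, where $\Phi$ is an $N \times M$ matrix with unit norm columns and $c = (c_1,\ldots,c_M) \in [0,\infty)^M$. The matrix inside the norm depends polynomially on the entries of $\Phi$ and linearly on $c$, and the operator norm is continuous on $N \times N$ matrices, so $g$ is jointly continuous in $(\Phi,c)$. The idea is to show that the parametric minimum $f(\Phi) = \min_c g(\Phi,c)$ inherits continuity, provided the minimization can be confined to a fixed compact set of scalings.

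The first substantive step is to restrict $c$ to a universal compact cube. Taking $c = 0$ gives $f(\Phi) \leq \|I_N\|_2 = 1$ for every admissible $\Phi$. Hence at any candidate minimizer, the positive semi-definite operator $T = \sum c_i \varphi_i \varphi_i^*$ satisfies $\|I_N - T\|_2 \leq 1$, which forces every eigenvalue of $T$ to lie in $[0,2]$. The same trace bound employed in the proof of the previous lemma then yields $\sum_i c_i = \mathrm{Tr}(T) \leq 2N$, so the minimum is attained in $Q := [0, 2N]^M$ independently of $\Phi$ (and without any frame hypothesis, since we used only $\|I_N - T\|_2 \leq 1$ and positivity of $T$). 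Therefore $f(\Phi) = \min_{c \in Q} g(\Phi, c)$ with $Q$ compact and $\Phi$-independent.

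Continuity now follows by the standard two-sided subsequence argument. Fix $\Phi_0$ with unit norm columns and a sequence $\Phi^{(n)} \to \Phi_0$. For the $\limsup$ bound, choose any $c^{*}$ attaining $f(\Phi_0)$ and use $f(\Phi^{(n)}) \leq g(\Phi^{(n)}, c^{*}) \to g(\Phi_0, c^{*}) = f(\Phi_0)$. For the $\liminf$ bound, pick minimizers $c^{(n)} \in Q$ of $g(\Phi^{(n)}, \cdot)$; by compactness of $Q$ a subsequence $c^{(n_k)}$ converges to some $\tilde c \in Q$, and joint continuity gives $\lim_k f(\Phi^{(n_k)}) = g(\Phi_0, \tilde c) \geq f(\Phi_0)$. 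Applying this reasoning to every convergent subsequence of $\{f(\Phi^{(n)})\}$ yields $\liminf_n f(\Phi^{(n)}) \geq f(\Phi_0)$, and combined with the reverse inequality we obtain $f(\Phi^{(n)}) \to f(\Phi_0)$.

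The only nonroutine ingredient is the uniform boundedness of optimal scalings, which is the content of the previous lemma (its proof in fact gives the explicit bound $2N$ via the trace argument just reproduced). Everything else is a textbook application of joint continuity and compactness, so I expect no further obstacles.
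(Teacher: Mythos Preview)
Your proof is correct and follows essentially the same strategy as the paper: use the uniform bound on optimal scalings from the preceding lemma to confine the minimization to a fixed compact set, then pass to a convergent subsequence of minimizers and invoke joint continuity of $(\Phi,c)\mapsto\|I_N-\sum c_i\varphi_i\varphi_i^*\|_2$. Your presentation is slightly tidier (explicit $\limsup$/$\liminf$ bounds and the observation that the trace argument gives the bound $2N$ without any frame hypothesis), but the underlying idea is the same.
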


\begin{proof}
Given a sequence of $N\times M$ matrices with unit norm columns  $\Phi(n)=\{\varphi_i(n)\}_{i=1}^M$ such that
\[ \lim_{n\rightarrow \infty}\Phi(n) = \Phi,\]
we need to prove $\lim_{n\rightarrow\infty} f(\Phi(n))=f(\Phi)$.

Choose $\{d_i(n)\}_{i=1}^M$ so that
\[ \|I_N-\sum_{i=1}^Md_i(n)\varphi_i(n)\varphi_i(n)^*\|_2=
\min_{c_i\geq0}\|I_N-\sum_{i=1}^M c_i\varphi_i(n)\varphi_i(n)^*\|_2=f(\Phi(n)).\]
Since the $\{d_i(n)\}$ are uniformly bounded by the
previous Lemma, by switching to a subsequence we may
assume
\[ \lim_{n\rightarrow \infty}d_i(n)=d_i\mbox{ for all }
i=1,2,\ldots,M.\]

Now, if 
\begin{equation}\label{equ:goal}
\|I_N-\sum_{i=1}^M d_i\varphi_i\varphi_i^*\|_2=f(\Phi), 
\end{equation}
we are done. For any scalar $c_i\geq0$,

\begin{align*}
\|I_N-\sum_{i=1}^M c_i\varphi_i\varphi_i^*\|_2&=\lim_{n\rightarrow\infty}\|I_N-\sum_{i=1}^M c_i\varphi_i(n)\varphi_i(n)^*\|_2\\
&\geq\lim_{n\rightarrow\infty}\|I_N-\sum_{i=1}^M d_i(n)\varphi_i(n)\varphi_i(n)^*\|_2\\&=\|I_N-\sum_{i=1}^M d_i\varphi_i\varphi_i^*\|_2,
\end{align*}
which means that the scalars $(d_i)_{i=1}^M$ achieves the minimum, hence \eqref{equ:goal}.
\end{proof}

\begin{proof}[Proof of Theorem \ref{thm:1}]
Let $f(\Phi)$ be as defined in Lemma \ref{lem:continuous}.


Let $e_1=[1,0,\cdots,0]\in\Hc_N$, and $\Phi_1:=\{e_1, \cdots,e_1\}$, which is $M$ copies of $e_1$. Then $f(\Phi_1)=1$. Choose $\Psi$ that is close enough to $\Phi_1$ but spans $\Hc_N$ (a frame), then we have $f(\Psi)$ is close to $f(\Phi_1)=1$.
\end{proof}

\section{Minimizing with Frobenius norm}\label{sec:fro}
This section focuses on Problem \eqref{equ:fro}. Notice for the Frobenius norm, the optimal operator $T_{\Phi}$ is the projection of $I_N$ onto the cone $C_{\Phi}=\{\sum_{i=1}^Mc_i\varphi_i\varphi_i^*:c_i\geq0\}$. Since $C_{\Phi}$ is closed and convex, we have the uniqueness of $T_{\Phi}$. To remind the reader of the notation, let \begin{equation*}
\min_{c_i\geq0}\|I_N-\sum_{i=1}^Mc_i\varphi_i\varphi_i^*\|_F=\|I_N-T_{\Phi}\|_F.
\end{equation*}

%

\subsection{Invertibility of $T_{\Phi}$}\label{sec:invert}
We are still interested in the condition number of the optimal operator $T_{\Phi}$. Surprisingly, $T_{\Phi}$ does not need to be invertible, or equivalently, $T_{\Phi}$ may not be a frame operator.
We need some basic facts to set up counterexamples.

Let \begin{equation}\label{equ:PI}
\min_{c_i\in\R }\|I_N-\sum_{i=1}^Mc_i\varphi_i\varphi_i^*\|_F=\|I_N-P_{\Phi}\|_F.
\end{equation}
Notice we allow $c_i$ to be negative here, so $P_{\Phi}$ is the projection of $I_N$ onto the subspace spanned by $\{\varphi_i\varphi_i^*\}_{i=1}^M$. Therefore  $P_{\Phi}\neq T_{\Phi}$ in general.

Observe that
\begin{align}
h(c)=\left\|\sum_{i=1}^Mc_i\varphi_i\varphi_i^* - I_N\right\|_F^2&= \tr\left(\sum_{i,j=1}^Mc_ic_j\varphi_i\varphi_i^*\varphi_j\varphi_j^* - 2\sum_{i=1}^Mc_i\varphi_i\varphi_i^* + I\right)\\
&=\sum_{i,j=1}^Mc_ic_j|\langle\varphi_i,\varphi_j\rangle|^2 - 2\sum_{i=1}^Mc_i\|\varphi_i\|_2^2 + N.
\end{align}

Taking the partial derivative with respect to $c_i$, we get
$$\frac{\partial h}{\partial c_i}=2\sum_j c_j|\langle \varphi_i,\varphi_j\rangle|^2-2\|\varphi_i\|_2^2$$
Therefore if $P_{\Phi}=\sum_{i=1}^Mc_i\varphi_i\varphi_i^*$, then the scalars  should satisfy
\begin{equation}\label{equ:linear}
Fc=g,
\end{equation}
where $F$ is as defined in Section \ref{sec:note} as $F(i,j)=|\langle \varphi_i,\varphi_j\rangle|^2$, and 
$g=(\|\varphi_1\|_2^2,\cdots,\|\varphi_N\|_2^2)^*
$. Since $F$ is the Gram matrix of $\{\varphi_i\varphi_i^*\}_{i=1}^M$, $F$ is strictly positive definite if $\{\varphi_i\varphi_i^*\}_{i=1}^M$ is linearly independent, in which case $c=F^{-1}g$ is the global minimum of \eqref{equ:PI}. Furthermore, if it so happens that the coordinates of $c=F^{-1}g$ are all nonnegative, then naturally $P_{\Phi}=T_{\Phi}$.

Equation \eqref{equ:linear} is not a necessary condition for $c$ to be an optimal scaling because we are minimizing $h(c)$ over the first orthant (rather than over the whole space). But given an optimal scaling, we still have $\frac{\partial h}{\partial c_i}=0$ for $i\in\{j:c_j>0\}$ since it does not sit on the boundary of the first orthant. To summarize,

\begin{proposition}\label{pro:gradient} Given $\Phi\in\Fc(M,N)$,
\begin{itemize}
\item[(1)] If $\{\varphi_i\varphi_i^*\}_{i=1}^M$ is linearly independent, then $c=F^{-1}g$ is the unique solution of \eqref{equ:PI}. If further $c=F^{-1}g\geq0$ (component wise), $c$ is the unique solution of both \eqref{equ:fro} and \eqref{equ:PI}, and $P_{\Phi}=T_{\Phi}$.
\item[(2)] If $c=(c_i)_{i=1}^M$ is an optimal scaling, then
\begin{equation}\label{equ:partial}\sum_{j=1}^M c_j|\langle \varphi_i,\varphi_j\rangle|^2=\|\varphi_i\|_2^2, \quad \forall i\in\{j:c_j>0\}
\end{equation}
\end{itemize}
\end{proposition}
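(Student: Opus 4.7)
The plan is to view
\[
h(c) = c^{*} F c - 2 g^{*} c + N
\]
as a convex quadratic function of $c \in \R^M$ and extract both claims from first-order optimality considerations.

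For part (1) I first note that $F$ is, by definition, the Gram matrix of $\{\varphi_i\varphi_i^*\}_{i=1}^M$ in the Hilbert space of $N\times N$ matrices equipped with the Frobenius inner product. Linear independence of these $M$ outer products is therefore equivalent to $F$ being strictly positive definite. Under this hypothesis $h$ is strictly convex on all of $\R^M$, so its unique critical point $\nabla h(c) = 2Fc - 2g = 0$, namely $c = F^{-1}g$, is its unique global minimizer; this solves \eqref{equ:PI}. If in addition $F^{-1}g \geq 0$ componentwise, the global minimizer already lies in the closed first orthant $\{c : c_i \geq 0\}$, so by restriction it is also the unique minimizer of $h$ over that orthant. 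Hence it solves \eqref{equ:fro} as well, and the two problems coincide, yielding $P_\Phi = T_\Phi$.

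For part (2) I would use a standard perturbation argument, which amounts to the KKT stationarity condition on the active coordinates. Suppose $c$ is an optimal scaling, and fix any index $i$ with $c_i > 0$. Then for all $\eps$ in some open interval around $0$, the vector $c + \eps e_i$ still has nonnegative entries, so it is feasible. Since $h(c+\eps e_i) \geq h(c)$ on this interval, the single-variable function $\eps \mapsto h(c+\eps e_i)$ attains an interior minimum at $\eps = 0$, and therefore its derivative at $0$ vanishes. Using the formula
\[
\frac{\partial h}{\partial c_i}(c) = 2\sum_{j=1}^M c_j |\langle \varphi_i,\varphi_j\rangle|^2 - 2\|\varphi_i\|_2^2
\]
derived just above the proposition, this gives exactly \eqref{equ:partial}.

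There is no genuine obstacle: the proposition is essentially a repackaging of the KKT conditions for minimizing a convex quadratic over the nonnegative orthant. The only point worth pinning down explicitly is the equivalence between linear independence of the matrices $\varphi_i\varphi_i^*$ (as elements of the Hilbert--Schmidt space) and invertibility of their Gram matrix $F$, but this is standard. Nothing else in the argument requires any frame-theoretic input beyond the explicit form of $h(c)$ already recorded in the excerpt.
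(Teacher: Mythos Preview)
Your proposal is correct and matches the paper's own argument essentially line for line: the paper also records $h(c)=c^{*}Fc-2g^{*}c+N$, notes that $F$ is the Gram matrix of $\{\varphi_i\varphi_i^*\}$ (hence positive definite under linear independence) to get the unique unconstrained minimizer $c=F^{-1}g$, and for part~(2) simply observes that an optimal scaling with $c_i>0$ ``does not sit on the boundary of the first orthant,'' which is exactly your $\eps$-perturbation / KKT stationarity argument made explicit.
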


\begin{example}
For $\Phi=\begin{bmatrix}
1&1&1\\
0&1&0\\
0&0&1
\end{bmatrix}$, the optimal operator $T_{\Phi}$ is not invertible. Because if $T_{\Phi}$ is invertible, the optimal scaling must have $c_i>0, i=1,2,3$. By Proposition \ref{pro:gradient}(2), it must hold that the solution of \eqref{equ:linear} is all positive. However, solving \eqref{equ:linear} gives
$ c=(-1,1,1).$ 

\end{example}

\begin{example} \label{exa:bad}
 In $\R^N (N\geq3)$ we define

\[ \varphi_1 = (1,0,\ldots,0),\ \varphi_2 = (a,b,0,\ldots,0)
\mbox{ where } a^2+b^2=1 \mbox{ and } a<b,\]
and
\[ \varphi_i = (c,c,0,\ldots,\underbrace{\sqrt{1-2c^2}}_{i\text{th}},0,\ldots,0),
\mbox{ where } c^2+c^2(a+b)^2 = 1+a^2, i\geq3.\]
It is easy to check that $2c^2 <1$.  
Now, $\{\varphi_i\}_{i=1}^N$ is a frame of $\R^N$. We display the first two columns of
$F$:
\[ F=\begin{bmatrix}
1& a^2& \cdots\\
a^2 & 1 & \cdots\\
c^2 & c^2(a+b)^2 & \cdots\\
\vdots & \vdots & \vdots\\
c^2&c^2(a+b)^2 & \cdots
\end{bmatrix} 
\]

Once again, if $T_{\Phi}$ were invertible, we require $c_i>0$, and $Fc=g$. 
However, the solution of $Fc=g$ is 
\[ c=(\frac{1}{1+a^2},\frac{1}{1+a^2},0,\ldots,0),\]
a contradiction.
\end{example}

\begin{remark}
The example above can be generalized to arbitrarily many frame vectors. Indeed, define $\varphi_i, i=1,2,\cdots,N$ as in Example \ref{exa:bad}, and add in $\varphi_{N+1},\cdots, \varphi_M$ such that $\varphi_i\varphi_i^*\in\{\sum_{i=1}^Nc_i\varphi_i\varphi_i^*: c_i\geq0\}, i=N+1, \cdots, M$.
\end{remark}

The following proposition shows that in $\R^2$, the optimal operator is always invertible.

\begin{proposition}\label{prop:R2}
Given any frame $\Phi\in\Fc(M,2)$, the optimal operator $T_{\Phi}$ for the Frobenius norm is invertible.
\end{proposition}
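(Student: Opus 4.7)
The plan is to argue by contradiction: assume $T_\Phi$ is singular. Since $T_\Phi = \sum c_i \varphi_i\varphi_i^* \succeq 0$ and lives on a $2$-dimensional space, $T_\Phi$ has rank $0$ or $1$, and I will rule out both cases.

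The rank-$0$ case means every $c_i = 0$, so $\|I_2 - T_\Phi\|_F = \sqrt 2$. But for any frame vector $\varphi_{i_0} \neq 0$, the single scaling $c_{i_0} = 1/\|\varphi_{i_0}\|^2$ (others zero) yields a rank-one projection, giving Frobenius distance $1 < \sqrt 2$, contradicting optimality.

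For the rank-$1$ case, write $T_\Phi = \lambda ww^*$ with $\|w\|=1$ and $\lambda > 0$. From $T_\Phi w^\perp = 0$ I get $\sum_{i} c_i |\langle \varphi_i, w^\perp\rangle|^2 = 0$, which together with $c_i \geq 0$ forces every active vector (meaning every $\varphi_i$ with $c_i > 0$) to be parallel to $w$. Writing such a $\varphi_i = \alpha_i w$ and applying Proposition \ref{pro:gradient}(2) gives $\lambda \alpha_i^2 = \|\varphi_i\|^2 = \alpha_i^2$, hence $\lambda = 1$ and $T_\Phi = ww^*$. Since $\Phi$ spans $\mathcal{H}_2$, there exists $\varphi_k$ not parallel to $w$; decompose $\varphi_k = \alpha_k w + \beta_k w^\perp$ with $\beta_k \neq 0$, and note $c_k = 0$. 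The directional derivative of $h(c) = \|I_2 - \sum c_i \varphi_i\varphi_i^*\|_F^2$ in the direction $e_k$ at $c$ equals
\[
\frac{\partial h}{\partial c_k}(c) = 2\bigl(\varphi_k^* T_\Phi \varphi_k - \|\varphi_k\|^2\bigr) = 2\bigl(|\langle w,\varphi_k\rangle|^2 - \|\varphi_k\|^2\bigr) = -2\beta_k^2 < 0,
\]
so increasing $c_k$ slightly strictly decreases $h$, contradicting optimality of $(c_i)$.

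The structural part (reducing to $T_\Phi = ww^*$ with all active $\varphi_i$ parallel to $w$) is the heart of the argument; the expected obstacle is keeping the KKT-style bookkeeping clean, namely combining Proposition \ref{pro:gradient}(2) (which gives equality of the gradient on the support of $c$) with the complementary inequality for indices outside the support (which is not explicitly stated in the excerpt but follows from the first-order optimality condition on the nonnegative orthant, as I use in the final directional-derivative step). Once this is handled, the $2$-dimensionality enters in exactly one place: it lets me pick $w^\perp$ as a $1$-dimensional complement and recognize any $\varphi_k$ transverse to $w$ as transverse to the entire active support, which is what makes the perturbation strictly improving.
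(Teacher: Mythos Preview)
Your proof is correct and takes a genuinely different route from the paper's. The paper argues by exhibiting an explicit scaling with Frobenius distance strictly less than $1$: it picks any two independent frame vectors, rotates them to the symmetric form $(a,b),(a,-b)$ with $a\ge b$ and $a^2+b^2=1$, and evaluates $\|I_2-cS\|_F^2$ at $c=1/(2a^2)$ to get $(1-b^2/a^2)^2<1$; since a singular $T_\Phi$ would force $\|I_2-T_\Phi\|_F\ge 1$, this finishes. Your argument instead leverages the first-order optimality conditions: you pin down a singular $T_\Phi$ as the rank-one projection $ww^*$ via Proposition~\ref{pro:gradient}(2), and then produce a feasible descent direction using any frame vector transverse to $w$. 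The paper's approach is more elementary and self-contained (no appeal to KKT-type reasoning) and yields a quantitative bound as a byproduct. Your approach is more structural and makes transparent exactly where two-dimensionality enters: the kernel of a singular nonzero $T_\Phi$ is one-dimensional, so every active vector is a scalar multiple of a single $w$, which is what forces $\lambda=1$ and makes the descent step work; this is precisely the mechanism that breaks down in the paper's $N\ge 3$ counterexamples. One cosmetic point: your rank-$0$ case says ``every $c_i=0$,'' which is not literally forced if some $\varphi_i=0$, but the conclusion $T_\Phi=0$ and the subsequent comparison with a rank-one projection go through unchanged.
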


\begin{proof}
Suppose the eigenvalues of $T_{\Phi}$ are $\lambda_1\geq\lambda_2$, then
$\|I_N-T_{\Phi}\|_F^2=(1-\lambda_1)^2+(1-\lambda_2)^2$. In order to prove $\lambda_2>0$, it suffices to show that $\|I_N-T_{\Phi}\|_F<1$.

Pick two independent vectors, say $\varphi_1, \varphi_2$ from the frame. It suffices to show that $$\min_{c_i\geq0}\|I_N-c_1\varphi_i\varphi_i^*-c_2\varphi_2\varphi_2^*\|_F<1,$$ since $\|I_N-T_{\Phi}\|_F\leq \min_{c_i\geq0}\|I_N-c_1\varphi_i\varphi_i^*-c_2\varphi_2\varphi_2^*\|_F$.

With rotation and adding a negative sign to the vectors, we can assume without loss of generality that
\[ \varphi_1 = (a,b)\mbox{ and } \varphi_2 = (a,-b).\]
We can further assume that they are both unit norm as $a^2+b^2=1$ and $a\geq b$.

A direct calculation shows that the eigenvalues of the
frame operator $S$ of $\{\varphi_1,\varphi_2\}$ are  $\{2a^2,2b^2\}$. 

$$\min\|I-\sum_{i=1}^2 c_i \varphi_i\varphi_i^*\|_F^2\leq \min_{c\geq0}\|I-cS\|_F^2=\min_{c\geq0}\left((1-2ca^2)^2+(1-2cb^2)^2\right)$$

Taking $c=\frac{1}{2a^2}$, we get
$$\min\|I-\sum_{i=1}^2 c_i \varphi_i\varphi_i^*\|_F^2\leq (1-b^2/a^2)^2<1.$$
\end{proof}

\subsection{When is $T_{\Phi}$ invertible?}

It is not easy to find a condition on the frame so that its optimal operator is guaranteed to be at least invertible. But the situation can be simplified when the frame $\Phi$ is full spark (every $N$ frame vectors from $\Phi$ span $\Hc_N$), and the outer products $\{\varphi_i\varphi_i^*\}_{i=1}^M$ are linearly independent. In this case, if the solution $c=F^{-1}g$ happens to be 
$$c_i\geq0, \text{ and } \#(\mathrm{supp}(c))\geq N,$$
then we are guaranteed to have an invertible $T_{\Phi}$ because the scaled frame $\{\sqrt{c_i}\varphi_i\}$ will for sure span. Moreover, the condition that both $\Phi$ is full spark and $\{\varphi_i\varphi_i^*\}$ is independent is not harsh when $M$ is no greater than the real dimension of the $N\times N$ symmetric matrices ($N^2$ for $\C$, and $N(N+1)/2$ for $\R$). In fact, such sets of frames are generic.

The following theorem also provides a sufficient condition for the invertibility of $T_{\Phi}$, when the outer products are independent. Moreover, the advantage of this sufficient condition is that it is directly on the frame.

\begin{theorem}\label{thm17}
Let $\{\varphi_i\}_{i=1}^M$ be a unit norm frame for $\Hc_N$  and let $\{\varphi_i\varphi_i^*\}_{i=1}^M$ be linearly independent. Let $f_i$ be the $i$th column of $F=\left(|\langle\varphi_i,\varphi_j\rangle|^2\right)$. 
If 
\[| \|f_i\|_1 - \|f_j\|_1|< \lambda\quad \forall i,j=1,2,\ldots,M,
\]
where $\lambda$ is the smallest singular value of the the matrix $F$,
then
$$T_{\Phi}=P_{\Phi} = \sum_{i=1}^M a_i\phi_i\phi_i^*\mbox{ where } a_i > 0, \mbox{ for all } i=1,2,\dots,M.$$

%
\end{theorem}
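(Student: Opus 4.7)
The plan is to reduce the statement to a positivity claim about a single explicit vector and then apply a perturbation argument. Because $\{\varphi_i\varphi_i^*\}_{i=1}^M$ is linearly independent, $F$ is strictly positive definite, and by Proposition \ref{pro:gradient}(1) the unconstrained minimizer of $\|I_N-\sum c_i\varphi_i\varphi_i^*\|_F^2$ over $\R^M$ is uniquely $c^\star = F^{-1}g = F^{-1}\mathbf{1}$ (the unit-norm hypothesis gives $g = \mathbf{1}$). If every coordinate of $c^\star$ is strictly positive, then $c^\star$ sits in the interior of the nonnegative orthant and is automatically the constrained minimizer of \eqref{equ:fro}; hence $P_\Phi = T_\Phi = \sum c^\star_i \varphi_i\varphi_i^*$ with $a_i = c^\star_i > 0$. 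So the theorem reduces to showing $c^\star > 0$ componentwise.

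The key perturbation identity comes from observing that $F\mathbf{1}$ is exactly the vector $v \in \R^M$ whose $i$th entry is $\|f_i\|_1$. Decompose $v = \bar v\mathbf{1} + \delta$ with $\bar v = \frac{1}{M}\sum_i v_i$ and $\delta \perp \mathbf{1}$. Applying $F^{-1}$ to both sides of $F\mathbf{1} = \bar v\mathbf{1} + \delta$ yields $\mathbf{1} = \bar v\, c^\star + F^{-1}\delta$, that is,
\[
c^\star \;=\; \frac{1}{\bar v}\bigl(\mathbf{1} - F^{-1}\delta\bigr).
\]
Since $\bar v > 0$, positivity of every entry of $c^\star$ is equivalent to the single inequality $(F^{-1}\delta)_i < 1$ for all $i$, which in turn will follow from the stronger bound $\|F^{-1}\delta\|_\infty < 1$.

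Now the hypothesis enters: $|v_i - v_j| < \lambda$ for all $i,j$ gives $\|\delta\|_\infty < \lambda$ directly, and through the algebraic identity $\sum_{i<j}(v_i-v_j)^2 = M\|\delta\|_2^2$ it controls $\|\delta\|_2$ by the uniform pairwise bound. Combined with the operator-norm estimate $\|F^{-1}\|_2 = 1/\lambda$ afforded by the smallest singular value assumption, one obtains the chain
\[
\|F^{-1}\delta\|_\infty \;\leq\; \|F^{-1}\delta\|_2 \;\leq\; \|\delta\|_2/\lambda.
\]
The main technical step—and the one I expect to be the hardest—is pushing the right-hand side strictly below $1$. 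A blunt $\|\delta\|_2 \leq \sqrt{M}\|\delta\|_\infty$ is too lossy, so one must exploit that $\delta$ is mean-zero (lies in $\mathbf{1}^\perp$) together with the uniform pairwise bound on $v$; alternatively, replacing the shift $\bar v$ by a scalar $\alpha$ chosen to minimize $\|\mathbf{1}-\alpha v\|_2$ can tighten the perturbation estimate being mapped through $F^{-1}$. Once $\|F^{-1}\delta\|_\infty < 1$ is secured, strict positivity of every $c^\star_i$ and hence the conclusion $T_\Phi = P_\Phi = \sum a_i\varphi_i\varphi_i^*$ with $a_i > 0$ follow immediately.
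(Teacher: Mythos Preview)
Your reduction is correct and matches the paper exactly: since $\{\varphi_i\varphi_i^*\}$ is independent and $g=\mathbf{1}$, Proposition~\ref{pro:gradient}(1) makes $P_\Phi=T_\Phi$ as soon as $c^\star=F^{-1}\mathbf{1}>0$ componentwise. The shift--and--perturb idea is also the paper's; the only cosmetic difference is that the paper centers at the midpoint $\delta=\tfrac12(\max_i\|f_i\|_1+\min_i\|f_i\|_1)$ rather than the mean $\bar v$, which gives $\|D\|_\infty<\lambda$ straight from the hypothesis without any appeal to mean--zero structure.

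The genuine gap is that you stop precisely where the work lies. You need $(F^{-1}\delta)_i<1$ for every $i$, and your own chain $\|F^{-1}\delta\|_\infty\le\|F^{-1}\delta\|_2\le\|\delta\|_2/\lambda$ cannot close because the hypothesis controls $\|\delta\|_\infty$, not $\|\delta\|_2$; the passage $\|\delta\|_2\le\sqrt{M}\,\|\delta\|_\infty$ costs an irrecoverable factor $\sqrt{M}$. Neither of your proposed fixes helps: the mean--zero property of $\delta$ gives no improvement on an $\ell^\infty$ bound after multiplication by $F^{-1}$, and optimizing the scalar shift in $\ell^2$ still leaves you in the wrong norm. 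The paper avoids the $\ell^2$ detour entirely. It writes $\delta\cdot\det F_i=\det F-\det(f_1,\dots,f_{i-1},D,f_{i+1},\dots,f_M)$ via column operations, then for each fixed $i$ replaces $D$ by a sign--adjusted vector $D^{(i)}$ chosen so that the cofactor expansion of $\det(f_1,\dots,D^{(i)},\dots,f_M)$ dominates $|\det(f_1,\dots,D,\dots,f_M)|$. Cramer's rule turns this into the single--coordinate quantity $x_i^{(i)}=(F^{-1}D^{(i)})_i$, which the paper bounds by $\|F^{-1}\|\,\|D^{(i)}\|_\infty<\tfrac{1}{\lambda}\cdot\lambda=1$. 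So the missing idea in your outline is to work coordinatewise with determinants and an $\ell^\infty$ estimate, rather than route everything through $\|F^{-1}\delta\|_2$.
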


\begin{proof}
By Proposition \ref{pro:gradient}, it suffices to show that the solution of $Fa=g$ has positive coordinates. This is equivalent to showing that $\det F_i>0$, where $F_i$ denotes the matrix $F$ with its $i$th column replaced by $g$, since $\det F>0$.

First note that if
\[ \delta= \frac{1}{2}\left ( \max_{1\le i \le M}\|f_i\|_1 + \min_{1\le i \le M}\|f_i\|_1\right ),\]
then
\[ \left| \|f_i\|_1 - \delta\right|< \lambda.\]

Let $D= \left ( \sum_{j=1}^Mf_j\right ) -\delta g$, then
\begin{eqnarray}
\det F &=& \det (f_1,f_2,\ldots,f_{i-1},\sum_{j=1}^Mf_j,f_{i+1},\ldots,f_M)\\
&=& \det(f_1,\ldots,f_{i-1},\delta g,f_{i+1},\ldots,f_{M}) +\det (f_1,\ldots,f_{i-1},D,f_{i+1},\ldots,f_M)\\
&=& \delta\cdot  \det F_i + \det(f_1,\ldots,f_{i-1},D,f_{i+1},\ldots,f_M).\label{equ:Fi}
\end{eqnarray}

Note that if $D=(d_1,d_2,\ldots,d_M)^T$,
then 
\[ |d_i|=\left|\sum_j|\langle \varphi_i,\varphi_j\rangle|^2- \delta\right|< \lambda,\mbox{ for all } i=1,2,\ldots, M.\]

Let $M_{ij}$ be the determinant of the matrix obtained by deleting the $i$th row and $j$th column of $F$. For fixed $i$, let
\[ D^{(i)}=(d_1^{(i)},d_2^{(i)},\ldots,d_M^{(i)})^T \mbox{ where } d_j^{(i)}(-1)^{i+j}M_{ij} = |d_j||M_{ij}|\mbox{ for all }j=1,2,\ldots,N.\]
Let $x^{(i)}=(x^{(i)}_1,x_2^{(i)},\ldots,x_M^{(i)})^T=F^{-1}D^{(i)}$ and  by Cramer's rule
\begin{eqnarray}
 (\det F)x^{(i)}_i &=& \det(f_1,f_2,\ldots,f_{i-1},D^{(i)},f_{i+1}, \ldots,f_M)\\
 &=& \sum_{j=1}^M(-1)^{j+i}d_j^{(i)} M_{ji}=\sum_{j=1}^N|d_j| |M_{ji}|\\
 &\ge& \left | \sum_{j=1}^M (-1)^{j+i}d_j M_{ji} \right |\\
 &=& |\det(f_1,\ldots,f_{i-1},D,f_{i+1},\ldots,f_N)|\label{equ:abs}
  \end{eqnarray}
On the other hand, since $\lambda$ is the smallest singular
value of $F$, 
 \begin{eqnarray}\label{equ:xi}
  x_i^{(i)} = (F^{-1}D^{(i)})_i\le \|F^{-1}D^{(i)}\|_{\infty}
  \le\|F^{-1}\|\|D^{(i)}\|_{\infty}
  < \frac{1}{\lambda}\lambda
\end{eqnarray}

By \eqref{equ:Fi}, \eqref{equ:abs} and \eqref{equ:xi},
\begin{align*}
\delta\cdot\det F_i&=\det F-\det(f_1,\cdots,f_{i-1},D,\cdots,f_M)\\
&\geq \det F-(\det F)x_i^{(i)}>\det F-\det F\cdot 1=0
\end{align*}

\end{proof}

\subsection{Convexity of $\mathcal{O}_{\Phi}$ revisited}\label{sec:conv}
The convexity of optimal scalings with the Frobenius norm is thoroughly studied in \cite{cc12}, in the case when frames are scalable. We wish to generalize it to any frames. Minimal optimal scalings play an important role in the structure of $\mathcal{O}_{\Phi}$, so we make some important observations of minimal optimal scalings first, which are missing in \cite{cc12}.

\begin{proposition}\label{pro:min} Given $\Phi\in\Fc(M,N)$,

(a) If $\{c_i\}_{i=1}^M$ is a minimal optimal scaling, then the outer products associated with it, i.e., $\{\varphi_i\varphi_i^*: c_i>0\}$, are linearly independent.

(b) Different minimal optimal scalings have different supports.
\end{proposition}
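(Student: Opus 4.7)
The plan is to prove (a) by a support-reduction argument, and then derive (b) as an immediate corollary using the linear independence established in (a). Throughout we work in the Frobenius setting where, as noted earlier in Section \ref{sec:fro}, $T_\Phi$ is the unique projection of $I_N$ onto the cone $C_\Phi$.

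For part (a), I proceed by contradiction. Suppose $c=(c_i)_{i=1}^M$ is a minimal optimal scaling, set $S=\{i:c_i>0\}$, and assume that $\{\varphi_i\varphi_i^*:i\in S\}$ is linearly dependent. Then there exist coefficients $(\alpha_i)_{i\in S}$, not all zero, with $\sum_{i\in S}\alpha_i\varphi_i\varphi_i^*=0$. The first sub-step is to observe that $\alpha$ must have entries of both signs: otherwise (say $\alpha_i\geq 0$ for all $i\in S$), taking the trace of the dependence relation gives $\sum_{i\in S}\alpha_i\|\varphi_i\|^2=0$, and since the $\varphi_i$ are nonzero this forces $\alpha\equiv 0$, a contradiction.

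The next sub-step is to perturb $c$ along $\alpha$. Extend $\alpha$ by zero outside $S$ and consider $c(t):=c+t\alpha$. Because $\sum_i \alpha_i\varphi_i\varphi_i^*=0$, we have $\sum_i c_i(t)\varphi_i\varphi_i^*=T_\Phi$ for every $t\in\R$, so any $c(t)$ with nonnegative entries is again an optimal scaling. The feasible interval
\[
I=\{t\in\R : c_i+t\alpha_i\geq 0 \text{ for all } i\in S\}
\]
is a closed interval containing $0$, and since $\alpha$ has both positive and negative entries, $I$ is bounded. At each endpoint $t^*$ of $I$, at least one constraint is active, i.e.\ $c_i(t^*)=0$ for some $i\in S$ with $c_i>0$. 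Picking either endpoint (which is nonzero since $0$ is interior), we obtain an optimal scaling $c(t^*)$ whose support is a proper subset of $S$ and which still represents $T_\Phi$ with nonnegative coefficients. This contradicts minimality of $c$, so the outer products must be linearly independent.

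For part (b), suppose $c$ and $c'$ are minimal optimal scalings with the same support $S$. By (a), $\{\varphi_i\varphi_i^*:i\in S\}$ is linearly independent. Since both scalings represent the same optimal operator $T_\Phi$, we have $\sum_{i\in S}(c_i-c_i')\varphi_i\varphi_i^*=0$, forcing $c_i=c_i'$ on $S$ and hence $c=c'$. The main conceptual obstacle is really just the sign argument in (a); once one sees that $\alpha$ cannot be sign-definite because the $\varphi_i\varphi_i^*$ are positive semidefinite with positive trace, the rest is a routine edge-of-the-feasible-region move.
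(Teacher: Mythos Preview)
Your proof is correct and follows essentially the same route as the paper: for (a) you move the given scaling along a nontrivial direction in the affine solution set until a coordinate vanishes, contradicting minimality, and (b) is the immediate consequence via uniqueness of coefficients in an independent family. Your trace argument showing $\alpha$ has both signs is a pleasant extra detail (the paper simply invokes that a nontrivial affine subspace through an interior point must meet the boundary of the orthant), but it is not strictly needed---one finite endpoint of the feasible interval already suffices.
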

\begin{proof}
(a) Let $I=\{i: c_i>0\}$. $\{\varphi_i\varphi_i^*\}_{i\in I}$ is linearly independent if and only if there is only one point in the set $SS=\{(x_i)_{i\in I}:\sum_{i\in I}x_i\varphi_i\varphi_i^*=T_{\Phi}, x_i\in\R\}$.  Suppose to the contrary that $SS$ is a nontrivial affine subspace (more than one point). If we call $\{(x_i)_{i\in I}:x_i\geq0\}$ the positive orthant, then the affine subspace $SS$ intersects the positive orthant since we have a scaling to begin with. Therefore, it will also intersect the boundary of the positive orthant, providing a solution of $\{(x_i)_{i\in I}:\sum_{i\in I}x_i\varphi_i\varphi_i^*=T_{\Phi}, x_i\geq0\} $ with at least one $x_i$ to be 0, which contradicts to the fact that $\{c_i\}_{i=1}^M$ is minimal.

(b) If they have the same support $I$, then having two minimal scalings means that $\{\varphi_i\varphi_i^*\}_{i\in I}$ is linearly dependent, which is not true by (a).
\end{proof}

We already know that $\mathcal{O}_{\Phi}$ is convex. But for the Frobenius norm, it is a polytope, with minimal optimal scalings as its vertices. The  proof is similar to that of \cite{cc12}.
\begin{theorem}
With Frobenius norm, the set of optimal scalings $\mathcal{O}_{\Phi}$ is a polytope. Moreover, $\mathcal{O}_{\Phi}$ is the convex hull of the minimal optimal scalings, i.e., the vertices of $\mathcal{O}_{\Phi}$ are minimal optimal scalings.
\end{theorem}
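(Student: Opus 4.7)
The plan is to split the proof into two steps. First I would show that $\mathcal{O}_\Phi$ is a bounded polyhedron (hence a polytope), then I would identify its vertices as exactly the minimal optimal scalings; the Minkowski--Weyl representation theorem for polytopes then packages these into the convex-hull statement.

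For the polytope step, since the Frobenius norm projection of $I_N$ onto the closed convex cone $C_\Phi$ is unique, every optimal scaling must satisfy the affine constraint $L(c) := \sum_{i=1}^M c_i \varphi_i \varphi_i^* = T_\Phi$. Thus $\mathcal{O}_\Phi = L^{-1}(T_\Phi) \cap \R^M_{\geq 0}$, an intersection of an affine subspace with the nonnegative orthant, so a polyhedron. For boundedness I would take the trace of the defining equation to obtain $\sum_i c_i \|\varphi_i\|^2 = \tr(T_\Phi)$, which (under the standing assumption $\varphi_i \neq 0$) bounds each coordinate $c_i$ individually. So $\mathcal{O}_\Phi$ is a polytope and coincides with the convex hull of its vertices.

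For the vertex characterization I would argue both directions. If $(c_i) \in \mathcal{O}_\Phi$ is \emph{not} minimal, there is some $(d_i) \in \mathcal{O}_\Phi$ with $\mathrm{supp}(d) \subsetneq \mathrm{supp}(c)$, and for all sufficiently small $\varepsilon > 0$ the two distinct points $c \pm \varepsilon(c - d)$ lie in $\mathcal{O}_\Phi$: the affine constraint is preserved since $c - d \in \ker L$, and nonnegativity is preserved because $c_i > 0$ on $\mathrm{supp}(c)$ while $c_i = d_i = 0$ elsewhere. Hence $(c)$ is the midpoint of two distinct optimal scalings and is not a vertex. Conversely, if $(c)$ \emph{is} minimal, Proposition \ref{pro:min}(a) tells me that $\{\varphi_i\varphi_i^* : i \in \mathrm{supp}(c)\}$ is linearly independent, so $L$ restricted to the coordinates in $\mathrm{supp}(c)$ is injective. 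Any convex combination $(c) = \tfrac{1}{2}(a + b)$ with $a, b \in \mathcal{O}_\Phi$ forces $\mathrm{supp}(a), \mathrm{supp}(b) \subseteq \mathrm{supp}(c)$, and then $L(a) = L(b) = T_\Phi$ together with injectivity forces $a = b = c$, so $(c)$ is a vertex.

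The main delicate point I anticipate is the nonnegativity bookkeeping in the ``not minimal $\Rightarrow$ not vertex'' direction: one must pick $\varepsilon > 0$ small enough that $c_i + \varepsilon(c_i - d_i) \geq 0$ and $c_i - \varepsilon(c_i - d_i) \geq 0$ hold for every $i$, which splits into the cases $i \notin \mathrm{supp}(c)$ (both quantities vanish), $i \in \mathrm{supp}(c) \setminus \mathrm{supp}(d)$ (where $c_i > 0, d_i = 0$ makes the bound trivial for $\varepsilon \leq 1$), and $i \in \mathrm{supp}(d) \subseteq \mathrm{supp}(c)$ (where a quantitative choice $\varepsilon < \min_i c_i / |c_i - d_i|$ does the job). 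The reverse direction, by contrast, is essentially a one-line consequence of the linear-algebraic content already packaged in Proposition \ref{pro:min}(a), which is why the proof strategy tracks the one in \cite{cc12}.
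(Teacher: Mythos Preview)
Your proposal is correct and follows essentially the same route as the paper: identify $\mathcal{O}_\Phi$ as the intersection of the affine set $\{c:\sum_i c_i\varphi_i\varphi_i^*=T_\Phi\}$ with the nonnegative orthant, then establish the vertex/minimal-scaling correspondence in both directions by producing a segment through $c$ inside $\mathcal{O}_\Phi$. The only minor differences are that you supply an explicit boundedness argument (which the paper leaves implicit) and that, in the ``minimal $\Rightarrow$ vertex'' direction, you invoke Proposition~\ref{pro:min}(a) (injectivity of $L$ on $\mathrm{supp}(c)$) whereas the paper uses part~(b).
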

\begin{proof}
$\mathcal{O}_{\Phi}$ is a polytope because it is an intersection of half planes: $\mathcal{O}_{\Phi}=\{(x_i)_{i=1}^M:\sum x_i\varphi_i\varphi_i^*=T_{\Phi}, x_i\geq0\}$.

 To prove the vertices part, we first  show any vertex must be a minimal optimal scaling of $\mathcal{O}_{\Phi}$.  Let $u\in \mathcal{O}_{\Phi}$ be a vertex, and assume to the contrary that $u$ is not minimal.  Then there exists $v\in \mathcal{O}_{\Phi}$ whose support is a proper subset of $u$'s. Let $w(t)=v+t(u-v)$. We observe that $w(t)\in\mathcal{O}_{\Phi}$ if and only if every component of $w(t)$, $w(t)_i\geq0$. Pick
 $$t_0=\left\{\begin{array}{ll}
 2, & \text{If for every $i$, $v_i\leq u_i$}\\
 \mathrm{min}\{\frac{v_i}{v_i-u_i}:v_i>u_i\}, & \text{otherwise}
 \end{array}\right..$$
We observe that  $t_0>1$ and $w(t_0)_i\geq0$, so $w(t_0)\in \mathcal{O}_{\Phi}$, which indicates that $u$ lies on the line segment with endpoints $v$ and $w(t_0)$, hence not a vertex. This is a contradiction.

Now suppose we are given a minimal scaling $w$ which is not a vertex of $\mathcal{O}_{\Phi}$.  Then we can write $w$ as a convex combination of vertices, say $w=\sum t_iv_i$, where we know at least two $t_i$'s are nonzero, say $t_1$ and $t_2$.  Since both $t_1$ and $t_2$ are positive and all the entries of $v_1$ and $v_2$ are nonnegative, it follows that $\mathrm{supp}(v_1)\cup\mathrm{supp}(v_2)\subseteq\mathrm{supp}(w)$. Moreover, $\mathrm{supp}(v_1)\neq\mathrm{supp}(v_2)$ by Proposition \ref{pro:min}(b). This contradicts to the fact the $w$ is a minimal scaling.
\end{proof}

The theorem above does not work for the operator norm case because the optimal operator is not necessarily unique.

\section*{Acknowledgements}

We would like to thank the reviewers for their constructive comments and thorough review, especially on the remark to allow condition number to be infinity.

\end{document}